\definecolor {webgreen} {rgb} {0,.5,0}
\definecolor {webbrown} {rgb} {.6,0,0}
\newcommand{\seqnum}[1]
{\href{http://www.oeis.org/#1}{\underline{#1}}}
\newtheorem{thm}{Theorem}
\newtheorem{prop}{Proposition}
\newtheorem{cor}{Corollary}
\newtheorem{prob}{Open Problem}
\newtheorem{conj}{Conjecture}
\newtheorem*{ident*}{Identity}
\begin{document}

~\\[0.4in]
\begin {center}
\vskip 1cm{\LARGE\bf 
Ramanujan Primes: \\
\vskip .1in
Bounds, Runs, Twins, and Gaps 
}

\vskip 1cm
\large
Jonathan~Sondow\\
209~West~97th~Street\\
New York,~NY~~10025\\
USA\\
\href {mailto:jsondow@alumni.princeton.edu} {\tt jsondow@alumni.princeton.edu} \\
\ \\
John~W.~Nicholson\\
P.~O.~Box~2423\\
Arlington,~TX~~76004\\
USA\\
\href {mailto:reddwarf2956@yahoo.com} {\tt reddwarf2956@yahoo.com} \\
\ \\
Tony~D.~Noe\\
14025~NW~Harvest~Lane\\
Portland,~OR~~97229\\
USA\\
\href {mailto:noe@sspectra.com} {\tt noe@sspectra.com} \\
\end {center}

\vskip .2in

\begin{abstract}
The $n$th Ramanujan prime is the smallest positive integer $R_n$	such that if $x \ge R_n$, then the interval $\left(\frac12x,x\right]$ contains at least $n$ primes. We sharpen Laishram's theorem that $R_n < p_{3n}$ by proving that the maximum of $R_n/p_{3n}$ is $R_5/p_{15} = 41/47$. We give statistics on the length of the longest run of Ramanujan primes among all primes $p<10^n$, for $n\le9$. We prove that if an upper twin prime is Ramanujan, then so is the lower; a table gives the number of twin primes below $10^n$ of three types. Finally, we relate runs of Ramanujan primes to prime gaps. Along the way we state several conjectures and open problems. The Appendix explains Noe's fast algorithm for computing $R_1,R_2,\dotsc,R_n$.\\
\end{abstract}


\section {Introduction} \label{SEC: introd}

For $n\ge1$, the $n$th \emph{Ramanujan prime} is defined as the smallest positive integer $R_n$ with the property that for any $x \ge R_n$, there are at least $n$ primes $p$ with $\frac12x<p\le x$. By its minimality, $R_n$~is indeed a prime, and the interval $\left(\frac12R_n,R_n\right]$ contains exactly $n$~primes~\cite{sondow}.

In $1919$ Ramanujan proved a result which implies that $R_n$ exists, and he gave the first five Ramanujan primes. (We formulate his result as a theorem and quote him.)

\begin{thm}[Ramanujan] \label{THM: ramanujan}
``Let $\pi(x)$ denote the number of primes not exceeding~$x$. Then $\dotso\pi(x) - \pi\!\left(\frac12x\right) \ge1,2,3, 4, 5, \dotsc$, if $x \ge 2,11,17,29, 41, \dotsc$, respectively.''
\end{thm}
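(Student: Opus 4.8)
The plan is to recover Ramanujan's own argument, whose heart is an effective Chebyshev-type estimate showing that $\pi(x)-\pi\!\left(\frac12x\right)\to\infty$. First I would introduce the two Chebyshev functions $\vartheta(x)=\sum_{p\le x}\log p$ and $\psi(x)=\sum_{p^m\le x}\log p=\sum_{m\ge1}\vartheta(x^{1/m})$, together with the classical identity $\log\lfloor x\rfloor!=\sum_{m\ge1}\psi(x/m)$, which follows by summing the von Mangoldt function over divisors. The key device is the alternating combination
\[
S(x):=\log\lfloor x\rfloor!-2\log\lfloor x/2\rfloor!=\sum_{m\ge1}(-1)^{m-1}\psi(x/m)=\psi(x)-\psi(x/2)+\psi(x/3)-\cdots .
\]
Since $\psi$ is non-decreasing, the terms $\psi(x/m)$ decrease in $m$, so the alternating sum is trapped between consecutive partial sums; in particular
\[
\psi(x)-\psi(x/2)\ \le\ S(x)\ \le\ \psi(x)-\psi(x/2)+\psi(x/3).
\]
Next I would estimate the left-hand side directly from Stirling's formula, which gives $S(x)=(\log 2)\,x+O(\log x)$. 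Feeding this into the trapping inequality and bounding $\psi(x/3)$ from above by a Chebyshev estimate of the form $\psi(t)<c\,t$ yields an explicit lower bound $\psi(x)-\psi(x/2)>c_0\,x$ valid once $x$ exceeds an effective threshold.

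The remaining reductions are routine. Passing from $\psi$ to $\vartheta$ costs only $\psi(x)-\vartheta(x)=\sum_{m\ge2}\vartheta(x^{1/m})=O(\sqrt{x}\log x)$, so $\vartheta(x)-\vartheta(x/2)>c_1 x$ for large $x$; and since $\vartheta(x)-\vartheta(x/2)=\sum_{x/2<p\le x}\log p\le\bigl(\pi(x)-\pi(x/2)\bigr)\log x$, I obtain
\[
\pi(x)-\pi\!\left(\tfrac12x\right)\ \ge\ \frac{\vartheta(x)-\vartheta(x/2)}{\log x}\ >\ \frac{c_1 x}{\log x}\ \longrightarrow\ \infty .
\]
Hence for every $n$ there is a threshold beyond which $\pi(x)-\pi\!\left(\frac12x\right)\ge n$, which is exactly the existence claim implicit in the statement. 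To pin down the explicit values $2,11,17,29,41$ for $n=1,\dots,5$, I would use the effective cutoff coming from the analytic bound to reduce the matter to a finite check: verify by direct computation that $\pi(x)-\pi\!\left(\frac12x\right)\ge n$ for every $x$ up to that cutoff once $x\ge R_n$, and that the count drops below $n$ just below $R_n$, confirming minimality.

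I expect the main obstacle to be the effective analytic bound on $\vartheta(x)-\vartheta(x/2)$. The alternating-series trapping and the $\psi$-to-$\vartheta$ passage are mechanical, but controlling Stirling's error term with explicit constants\emdash sharply enough that the leftover finite verification for the small thresholds is actually feasible by hand\emdash is the delicate step, and it is precisely where Ramanujan's ingenuity lay.
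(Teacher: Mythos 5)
Your plan is correct and is essentially the proof the paper relies on: the paper itself gives no argument beyond citing Ramanujan's $\Gamma$-function proof, and your identity $\log\lfloor x\rfloor!-2\log\lfloor x/2\rfloor!=\sum_{m\ge1}(-1)^{m-1}\psi(x/m)$ with alternating-sum trapping, Stirling, the $\psi$-to-$\vartheta$-to-$\pi$ reduction, and a terminal finite check is exactly that argument. The only cosmetic difference is that Ramanujan phrases the factorials as values of $\Gamma$ (whence the paper's ``properties of the $\Gamma$-function'') and carries the constants through explicitly, reaching $\pi(x)-\pi\!\left(\frac12x\right)>\frac{1}{\log x}\left(\frac{x}{6}-3\sqrt{x}\right)$ for $x>300$ before verifying the thresholds $2,11,17,29,41$ by direct count, which is precisely the effective step you correctly identify as the delicate one.
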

\begin{proof}
This follows from properties of the $\Gamma$-function. See Ramanujan \cite{ramanujan} for details, and Shapiro \cite[Section 9.3B]{shapiro} for an exposition of Ramanujan's idea.
 \end{proof}

The case $R_1=2$ is \emph{Bertrand's Postulate}: for all $x \ge 2$, there exists a prime $p$ with $\frac12x<p\le x$. For $n=1,2,3,\dotsc$, the $n$th Ramanujan prime \cite[Sequence \seqnum {A104272}]{oeis} is
$$R_n=2, 11, 17, 29, 41, 47, 59, 67, 71, 97, 101, 107, 127, 149, 151, 167, 179, 181, 227, 229,233, \dotso.$$

In the present paper, we report progress on three predictions~\cite[Conjectures~$1,2,3$]{sondow} about Ramanujan primes: on bounds, runs, and twins.

In the next section, we sharpen Laishram's theorem that $R_n<p_{3n}$, where $p_n$ denotes the $n$th prime. Namely, we prove the optimal bound that the maximum value of $R_n/p_{3n}$ is $R_5/p_{15} = 41/47$. The proof uses another result of Laishram and a computation of the first $169350$ Ramanujan primes by Noe's fast algorithm. Our first new conjecture follows.

In Section~\ref{SEC: runs}, we present statistics on the length of the longest run of Ramanujan primes among all primes $p<10^n$, for $n\le9$. We pose an open problem on the unexpectedly long runs of non-Ramanujan primes, and make a new conjecture about both types of runs.

In Section~\ref{SEC: twins}, we prove that if the larger of two twin primes is Ramanujan, then its smaller twin is also Ramanujan, and we provide a table of data on the number of twins below $10^n$, again for $n\le9$. We offer several new conjectures and open problems on twin primes.

In Section~\ref{SEC: gaps}, we associate runs of odd Ramanujan primes to certain prime gaps.

The Appendix explains the algorithm for computing Ramanujan primes and includes a \textit{Mathematica} program.

\section {Bounds} \label{SEC: bound}

Here are some estimates for the $n$th Ramanujan prime.

\begin{thm}[Sondow]\label{THM: sondow}
The following inequalities hold:
\begin{equation}
	2n\log2n < p_{2n} < R_n < 4n\log4n < p_{4n} \qquad (n>1). \label{EQ: ineqs}
\end{equation}
Moreover, for every $\epsilon > 0$, there exists $N_0(\epsilon)>0$ such that
\begin{equation}
	R_n < (2+\epsilon)n \log n \qquad (n\ge N_0(\epsilon)). \label{EQ: 1+e}
\end{equation}
In particular, $R_n \sim p_{2n}$ as $n\to\infty$.
\end{thm}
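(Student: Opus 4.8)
The plan is to separate the four-term chain from the two asymptotic claims, reducing everything to explicit Chebyshev and Rosser--Schoenfeld type estimates for $\pi(x)$ and $p_k$ together with a finite check of small $n$ against the computed table of Ramanujan primes. First I would dispose of the outer links: $2n\log 2n < p_{2n}$ and $4n\log 4n < p_{4n}$ are both instances of the classical inequality $p_k > k\log k$ (with $k=2n$ and $k=4n$), so no new idea is needed. The chain therefore collapses to two substantive estimates, a lower bound $p_{2n} < R_n$ and an upper bound $R_n < 4n\log 4n$; granting the latter, the final link $R_n < 4n\log 4n < p_{4n}$ is immediate.

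For the upper bound I would use the defining property that $R_n$ is the least integer with $f(x) := \pi(x)-\pi(x/2) \ge n$ for all real $x \ge R_n$; thus it suffices to prove $f(x) \ge n$ whenever $x \ge 4n\log 4n$. Feeding in $x/\log x < \pi(x)$ and $\pi(x/2) < 1.25506\,(x/2)/\log(x/2)$ gives $f(x) > x/\log x - 1.25506\,x/(2\log(x/2))$, a quantity I would check is increasing in $x$ and, evaluated at $x = 4n\log 4n$, has leading term $(4 - 2\cdot 1.25506)\,n \approx 1.49\,n$, comfortably above $n$. The asymptotic margin handles all large $n$, the finitely many remaining $n$ are verified directly from the table, and strictness of the estimates yields the strict inequality.

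For the lower bound, which I expect to be the main obstacle, it suffices to produce a single real $x \ge p_{2n}$ at which $f(x) \le n-1$, since then $x < R_n$ forces $R_n > x \ge p_{2n}$. The tempting choice $x = p_{2n}$ is too weak: there $f = 2n - \pi(p_{2n}/2)$, and because $\pi(p_{2n}/2) \sim n$ this can equal $n$ or more (it already does at $n=2$). Instead I would take $x = 2q$ with $q$ a prime lying just above $p_{2n}/2$, so that the jump of $\pi$ at the lower endpoint depresses the count; the task becomes guaranteeing, via explicit prime estimates, the existence of such a $q$ for which the dyadic interval $(q,2q]$ holds at most $n-1$ primes, with small $n$ dispatched by computation. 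Extracting this prime-poor interval from the estimates, uniformly in $n$, is the delicate step, reflecting that $R_n$ and $p_{2n}$ are asymptotically equal and so the strict inequality is genuinely tight.

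Finally, for the asymptotic refinement $R_n < (2+\epsilon)n\log n$ I would rerun the upper-bound computation with $x = (2+\epsilon)n\log n$, but now invoke the prime number theorem with error term (or sharper Dusart bounds) so that the crude factor $1.25506$ is replaced by $1+o(1)$; the leading term of the lower bound for $f(x)$ then becomes $(2+\epsilon)n - (1+\tfrac{\epsilon}{2})n = (1+\tfrac{\epsilon}{2})n > n$, valid once $n \ge N_0(\epsilon)$. Combining $p_{2n} < R_n < (2+\epsilon)n\log n$ with $p_{2n} \sim 2n\log n$ and letting $\epsilon \to 0$ squeezes $R_n/(2n\log n) \to 1$, giving $R_n \sim 2n\log n \sim p_{2n}$.
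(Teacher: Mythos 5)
Your architecture is right, and three of your four components are sound: the outer links are indeed just Rosser's theorem $p_k>k\log k$ at $k=2n,4n$; your upper-bound argument (show $\pi(x)-\pi\left(\frac12x\right)\ge n$ for $x\ge 4n\log 4n$ via $x/\log x<\pi(x)<1.25506\,x/\log x$, monotonicity, and a finite check of small $n$) is essentially the argument behind the paper's citation; and your treatment of \eqref{EQ: 1+e} and of $R_n\sim p_{2n}$ is correct, since there the margin $\frac{\epsilon}{2}n$ is first-order and PNT-quality bounds suffice. (Note the paper itself only sketches, deferring to Sondow's Monthly article.)

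The genuine gap is the lower bound $p_{2n}<R_n$, precisely the step you flag as delicate, and the tools you name cannot close it. The deficit you must detect is second-order: with $q$ the least prime above $\frac12 p_{2n}$, one has $\pi(2q)-\pi(q)=n-\Theta(n/\log n)$, so any estimate pairing an upper bound for $\pi(2q)$ with a lower bound for $\pi(q)$ whose leading constants differ (such as $1.25506$ versus $1$) commits an error of size $\Theta(n)$ that swamps the margin; first-order Chebyshev/Rosser--Schoenfeld bounds provably cannot see the inequality, in contrast to the upper-bound branch where the margin is $\approx 0.49n$. Your device $x=2q$ also is not clearly a gain over $x=p_{2n}$: since $f(2q)=2n+\#\{\text{primes in }(p_{2n},2q]\}-\pi\left(\frac12 p_{2n}\right)-1$, you \emph{lose} whenever two or more primes fall in $(p_{2n},2q]$, which you cannot exclude without explicit prime-gap results. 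The missing idea is a genuinely second-order input, namely the Rosser--Schoenfeld inequality $\pi(2x)<2\pi(x)$ for $x\ge 11$ (or Dusart's bounds for $p_k$ with $\log\log k$ terms). With it the lower bound takes three lines: the interval $\left(\frac12 R_n,R_n\right]$ contains exactly $n$ primes, so $\pi(R_n)=\pi\left(\frac12 R_n\right)+n<2\pi\left(\frac12 R_n\right)$ once $\frac12 R_n\ge 11$, whence $\pi\left(\frac12 R_n\right)>n$, $\pi(R_n)>2n$, and $R_n>p_{2n}$, the cases $n=2,3$ being checked by hand. The same inequality shows your ``tempting choice'' $x=p_{2n}$, which you discarded on the basis of $n=2$, actually works for every $n\ge 4$: $\pi(p_{2n})=2n<2\pi\left(\frac12 p_{2n}\right)$ gives $f(p_{2n})\le n-1$ directly. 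This is the content hiding behind the paper's phrase ``inequalities of Rosser and Schoenfeld.''
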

\begin{proof}
Inequalities of Rosser and Schoenfeld for $\pi(x)$, together with \emph{Rosser's theorem} \cite{rosser} that $p_n>n\log n$, lead to \eqref{EQ: ineqs}. The bound \eqref{EQ: 1+e} follows from the Prime Number Theorem. For details, see Sondow \cite{sondow}.
\end{proof}

A prediction~\cite[Conjecture~$1$]{sondow} that  \eqref{EQ: ineqs} can be improved to \mbox{$p_{2n}<R_n < p_{3n}$} has been proved by Laishram.

\begin{thm}[Laishram] \label{THM: laishram}
For all $n\ge1$, we have $R_n < p_{3n}.$
\end{thm}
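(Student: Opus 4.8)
The plan is to reduce $R_n < p_{3n}$ to a single estimate on the prime-counting function and then finish with explicit inequalities together with a finite computation. Write $g(x) = \pi(x) - \pi\!\left(\frac12x\right)$ for the number of primes in $\left(\frac12x,x\right]$. The minimality built into the definition of $R_n$ says that, for an integer $c$, one has $R_n \le c$ if and only if $g(x) \ge n$ for every $x \ge c$. So to prove $R_n < p_{3n}$ it suffices to exhibit an integer $c < p_{3n}$ with $g(x) \ge n$ for all $x \ge c$; I would aim for $c = p_{3n-2}$, which already yields the strict inequality since $p_{3n-2} < p_{3n}$.

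The engine is the one inequality $\pi\!\left(\frac12x\right) \le \frac23\pi(x)$, which I expect to hold for all $x$ beyond an explicit bound $X_0$. This is natural heuristically: by the Prime Number Theorem the ratio $\pi(x/2)/\pi(x)$ tends to $\frac12$, hence sits safely below $\frac23$ once $x$ is large. To make it rigorous I would insert the Rosser--Schoenfeld bounds $\frac{x}{\log x} < \pi(x) < \frac{x}{\log x}\left(1 + \frac{3}{2\log x}\right)$ and estimate
$$\frac{\pi(x/2)}{\pi(x)} < \frac12\cdot\frac{\log x}{\log(x/2)}\left(1 + \frac{3}{2\log(x/2)}\right),$$
whose right-hand side decreases to $\frac12$ and falls below $\frac23$ for all $x \ge X_0$, with $X_0$ only a few thousand. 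Granting this, for $x \ge \max(X_0,\,p_{3n-2})$ we have $\pi(x) \ge 3n-2$, and since $\pi(x/2)$ is an integer we get $\pi(x/2) \le \lfloor\frac23\pi(x)\rfloor$, so that
$$g(x) = \pi(x) - \pi\!\left(\tfrac12x\right) \ge \left\lceil \tfrac13\pi(x)\right\rceil \ge \left\lceil \tfrac13(3n-2)\right\rceil = n.$$
Thus $R_n \le \max(X_0,\,p_{3n-2})$, and for every $n$ large enough that $p_{3n-2} \ge X_0$ this gives the desired $R_n \le p_{3n-2} < p_{3n}$. The integrality step is what buys the strictness for free.

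It then remains to handle the finitely many small $n$ for which $p_{3n-2} < X_0$, that is, $n$ below some explicit $n_0$ (of order a few hundred). For each such $n$ I would simply compare the tabulated value of $R_n$, produced by Noe's algorithm, against $p_{3n}$ read off a table of primes; each case is a single numerical comparison.

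The delicate point is locating the crossover $X_0$. The asymptotic slack between the limiting ratio $\frac12$ and the target $\frac23$ is generous, but at moderate $x$ the error terms in the explicit bounds for $\pi$ erode it, so $X_0$, and hence the length of the finite verification, depends sensitively on how sharp a $\pi$-inequality I invoke. Using sharper (Dusart-type) estimates would push $X_0$ down and shorten the computation, at the price of more careful constants; striking this balance, while keeping the elementary integrality bound aligned with the real-variable ratio estimate, is where I expect the main effort to lie, and is plausibly the role of the auxiliary result of Laishram mentioned above.
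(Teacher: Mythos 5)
Your proposal is correct, but it takes a genuinely different route from the paper. The paper does not reprove Laishram's theorem at all: its ``proof'' is a citation, sketching that Dusart's inequalities for $\theta(x)$ yield an explicit threshold $N_0(\epsilon)$ in the bound $R_n<(2+\epsilon)n\log n$, which combines with Rosser's theorem $p_n>n\log n$ and a large finite verification (the same machinery, run with $\epsilon=3/5$, is what forces the paper's Theorem~\ref{THM: 41/47} to compute the first $169350$ Ramanujan primes). You instead prove the result directly from a single density inequality, $\pi\!\left(\frac12x\right)\le\frac23\pi(x)$ for $x\ge X_0$, via Rosser--Schoenfeld, and your arithmetic checks out: writing $u=\log(x/2)$, your ratio bound reduces to $\left(1+\frac{\log 2}{u}\right)\left(1+\frac{3}{2u}\right)<\frac43$, which holds for $u\ge7.1$, i.e., $X_0\approx2.5\times10^3$ suffices, so the finite verification covers only roughly $n\le125$ (where $p_{3n-2}<X_0$) rather than $169350$ cases. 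The integrality step is also sound: $\pi(x)\ge3n-2$ and $\pi\!\left(\frac12x\right)\le\left\lfloor\frac23\pi(x)\right\rfloor$ give $\pi(x)-\pi\!\left(\frac12x\right)\ge\left\lceil\frac13(3n-2)\right\rceil=n$, yielding the slightly stronger conclusion $R_n\le p_{3n-2}$ for large $n$; and your unconditional bound $R_n\le\max(X_0,p_{3n-2})$ removes any circularity in using a search algorithm for the small cases, since it bounds the search space a priori (the Appendix algorithm, by contrast, presupposes $R_n<p_{3n}$ to know where to stop). What each approach buys: yours is elementary, self-contained, and computationally light, but the hard-wired constant $\frac23$ is all it delivers; the Dusart--Laishram route, with its tunable $\epsilon$ in $R_n<(2+\epsilon)n\log n$, is strictly stronger asymptotically and is precisely what powers the paper's optimal bound $\max_n R_n/p_{3n}=41/47$ in Theorem~\ref{THM: 41/47}. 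One small correction to your closing speculation: Laishram's auxiliary Theorem~1 (the explicit formula $N_0(\epsilon)=(2/\epsilon)^{c/\epsilon}$) plays no role in your argument and is needed by the paper only for that sharper result, not for the bare inequality $R_n<p_{3n}$.
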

\begin{proof}
Dusart's inequalities \cite{dusart} for Chebychev's function
$$\theta(x):=\mspace{-10mu}\sum_{\text{prime }p\,\le\, x} \mspace{-10mu}\log p\le\pi(x)\log x$$
lead to an explicit value of $N_0(\epsilon)$ in \eqref{EQ: 1+e}, for each $\epsilon>0$. For details, see Laishram \cite{laishram}.
\end{proof}

Using one of those values and a fast algorithm for computing Ramanujan primes (see the Appendix), we sharpen Theorem~\ref{THM: laishram} by giving an optimal upper bound on $R_n/p_{3n}$, namely, its maximum. (Notice that the rational numbers $R_n/p_{3n}$ are all distinct, because the $p_{3n}$ are distinct primes and $0<R_n/p_{3n}<1$. Thus the maximum occurs at only one value of~$n$.)

\begin{thm} \label{THM: 41/47}
The maximum value of $R_n/p_{3n}$ is
\begin{align*}
	\max_{n\ge1}\frac{R_n}{p_{3n}} &= \frac{R_5}{p_{15}} = \frac{41}{47} = 0.8723\dotso.
\end{align*}
\end{thm}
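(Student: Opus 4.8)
The plan is to reduce the claim to a finite computation by using the asymptotic bound to handle all large $n$, then verifying the maximum directly on the remaining finite range.

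The plan is to split the range of $n$ into a large-$n$ tail, dispatched analytically, and a finite initial segment, dispatched by direct computation. The guiding observation is that $R_n \sim p_{2n}$ by Theorem~\ref{THM: sondow}, while $p_{2n}/p_{3n} \to 2/3$ by the Prime Number Theorem; hence $R_n/p_{3n} \to 2/3 = 0.666\dotso$, which sits safely below $41/47 = 0.8723\dotso$. So the maximum cannot occur for large $n$, and the whole task is to make this quantitative and then check what remains by hand.

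First I would fix a convenient $\epsilon$\emdash say $\epsilon = \frac12$\emdash and invoke the explicit value $N_0(\epsilon)$ furnished by Laishram's proof of Theorem~\ref{THM: laishram}, which gives $R_n < (2+\epsilon)n\log n$ for all $n \ge N_0(\epsilon)$. Pairing this with Rosser's lower bound $p_{3n} > 3n\log 3n$ (valid since $3n\ge1$) yields, for every $n \ge N_0(\epsilon)$,
$$\frac{R_n}{p_{3n}} < \frac{(2+\epsilon)\,n\log n}{3n\log 3n} = \frac{2+\epsilon}{3}\cdot\frac{\log n}{\log 3n} < \frac{2+\epsilon}{3} = \frac{5}{6} = 0.8333\dotso < \frac{41}{47},$$
where the middle inequality holds because $\log 3n > \log n$. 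Thus every index $n \ge N_0(\tfrac12)$ is ruled out at once, with room to spare.

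It then remains to treat the finitely many indices $n < N_0(\tfrac12)$. Here I would compute $R_1, R_2, \dotsc$ up to the cutoff using Noe's fast algorithm (see the Appendix), form each ratio $R_n/p_{3n}$, and verify by inspection that the largest value is attained uniquely at $n=5$, where $R_5/p_{15} = 41/47$. Since the ratios $R_n/p_{3n}$ are all distinct\emdash as noted just before the statement\emdash this identifies the maximum unambiguously.

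The main obstacle is quantitative rather than conceptual: the tail argument only eliminates $n \ge N_0(\epsilon)$, and the smaller one takes $\epsilon$, the larger $N_0(\epsilon)$ grows, so one must choose $\epsilon$ large enough that $(2+\epsilon)/3$ keeps a clear margin below $41/47$ (any $\epsilon < \frac{123}{47}-2 = \frac{29}{47}$ suffices) yet small enough that Laishram's explicit $N_0(\epsilon)$ stays within computational reach. The feasibility of the argument rests entirely on that threshold being of modest size, so that generating the initial run of Ramanujan primes (the authors compute the first $169350$) and scanning the resulting finite list of ratios is genuinely practical.
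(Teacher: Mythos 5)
Your proposal is correct and takes essentially the same route as the paper: Laishram's explicit threshold $N_0(\epsilon)$ combined with Rosser's bound $p_{3n}>3n\log 3n$ handles the tail, and a computation of the initial Ramanujan primes (checking the ratio is uniquely maximized at $n=5$) settles the rest. The only difference is parametric: the paper takes $\epsilon=3/5$ precisely because Laishram's constant $c(0.6)=6$ gives the modest cutoff $N_0(3/5)=(10/3)^{10}\approx 169351$, whereas your choice $\epsilon=1/2$ presumes an explicit value of $c(1/2)$ (not quoted here) and would inflate the cutoff to $4^{2c}\ge 4^{12}\approx 1.7\times 10^{7}$ even if $c$ stayed at $6$, making the finite verification far heavier though still feasible.
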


\begin{proof}
Since $41/47 >  0.8666\dotso =13/15 $, it suffices to show $R_n/p_{3n} < 13/15$ for $n\neq5$.

Set $\epsilon = 3/5$ and substitute $2+\epsilon = 13/5$ into \eqref{EQ: 1+e}. Using Rosser's theorem with $3n$ in place of $n$, we can write the result as
\begin{equation*}
	R_n < \frac{13}{15}\,3n\log n < \frac{13}{15}\,p_{3n}  \qquad(n\ge N_0(3/5)).
\end{equation*}

According to Laishram \cite[Theorem~1]{laishram}, if $0<\epsilon\le1.08$, then $N_0(\epsilon) = (2/\epsilon)^{c/\epsilon}$ in \eqref{EQ: 1+e}, where $c=c(\epsilon)=6$ at $\epsilon= 0.6$. Hence $N_0(3/5) = (10/3)^{10} = 169350.87\dotso$,
and so
\begin{equation*}
	\frac{R_n}{p_{3n}} < \frac{13}{15} \qquad (n>169350).
\end{equation*}

To complete the proof, we compute the first $169350$ Ramanujan primes and then check that $R_n/p_{3n}<13/15$ when $5\neq n\le169350$.
\end{proof}

Similarly, one can show that
\begin{align*}
	\max_{n\neq5}\frac{R_n}{p_{3n}} &= \frac{R_{10}}{p_{30}} = \frac{97}{113} = 0.8584\dotso,\\
	\max_{n\neq5\,\text{\rm or}\,10}\frac{R_n}{p_{3n}} &= \frac{R_{2}}{p_{6}} = \frac{11}{13} = 0.8461\dotso,
\end{align*}
\noindent and so on down towards
$$\lim_{n\to\infty} \frac{R_n}{p_{3n}} = \frac23 = 0.666\dotso.$$

We conclude this section with a related prediction.

\begin{conj} \label{CONJ: R vs p}
For $m=1,2,3,\dotsc,$ let $N(m)$ be given by the following table.

\begin{center}
  \begin{tabular}{|c|c|c|c|c|c|c|c|c|}
  \hline
    $m$      & $1$ &    $2$ &       $3$ &       $4$ &     $5$ &  $6$ &    $7,8,\dotsc,19$ &     $20,21,\dotso$\\ 
    \hline
    $N(m)$ & $1$ & $1245$ & $189$ & $ 189$ & $85$ & $85$ & $10$ & $2$\\ 
    \hline
  \end{tabular}
\end{center}
Then we have
$$\pi(R_{mn}) \le m\pi(R_n) \qquad (n \ge N(m)).$$
Equivalently, if we define the function $\rho$ by $\rho(n):=\pi(R_n)$, so that $R_n=p_{\rho(n)}$, then
$$\rho(mn) \le m\rho(n)  \qquad (n \ge N(m)).$$
\end{conj}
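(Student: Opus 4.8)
The plan is to recast the claim in its equivalent index form $\rho(mn)\le m\rho(n)$ and attack it, as in the proof of Theorem~\ref{THM: 41/47}, by splitting the range of $n$ into a large-$n$ regime handled analytically and a bounded initial regime handled by direct computation of the $R_n$ via Noe's algorithm. Writing $\delta(k):=\rho(k)-2k$, which is positive for $k>1$ because $R_k>p_{2k}$ by \eqref{EQ: ineqs}, the inequality $\rho(mn)\le m\rho(n)$ is, after subtracting $2mn$ from both sides, equivalent to the submultiplicativity-type estimate
$$\delta(mn)\le m\,\delta(n).$$
Thus everything reduces to controlling the ``excess index'' $\delta(n)$, the number of primes $p$ with $p_{2n}<p\le R_n$. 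The case $m=1$ is trivial, so the content is for $m\ge2$.

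For the large-$n$ regime I would convert the known size estimates for $R_n$ into estimates for $\rho(n)=\pi(R_n)$. An upper bound on $\rho(mn)$ comes from feeding the bound $R_{mn}<(2+\epsilon)\,mn\log(mn)$ of \eqref{EQ: 1+e} into an explicit upper estimate for $\pi$ of Rosser--Schoenfeld or Dusart type; a lower bound on $\rho(n)$ comes from $R_n>p_{2n}$, which already gives $\rho(n)\ge 2n+1$, sharpened if needed by combining $R_n>2n\log 2n$ with a lower estimate for $\pi$. The aim is to produce, for each fixed $m$, an explicit threshold $N^{*}(m)$ beyond which $\rho(mn)<m\rho(n)$ holds unconditionally, exactly as Laishram's explicit value of $N_0(\epsilon)$ produced the threshold $169350$ in Theorem~\ref{THM: 41/47}.

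With such a threshold in hand, the second step is a finite verification: compute enough Ramanujan primes to evaluate both $R_{mn}$ and $R_n$, read off $\rho(n)=\pi(R_n)$, and check $\rho(mn)\le m\rho(n)$ for every $n$ with $N(m)\le n\le N^{*}(m)$, while confirming that the inequality fails at $n=N(m)-1$. This is what would pin down the tabulated values of $N(m)$, and it is the same kind of task already carried out for the first $169350$ Ramanujan primes.

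The main obstacle is the analytic step, and I expect it to be genuinely hard rather than merely laborious. Since $R_n\sim p_{2n}$, we have $\rho(n)=2n(1+o(1))$, so both $\rho(mn)$ and $m\rho(n)$ equal $2mn(1+o(1))$; the inequality is entirely about their second-order terms, that is, about comparing $\delta(mn)$ with $m\,\delta(n)$. The data ($\delta(2)=1$, $\delta(5)=3$, $\delta(10)=5$) suggest that $\delta(n)$ is very small, plausibly $O(n/\log n)$, and irregular, being tied to the fluctuations of prime gaps. Unfortunately the gap between the available explicit upper and lower bounds for $\pi$ at arguments of size $\sim 2n\log n$ is itself of order $n/\log n$, which is comparable to the quantity $m\,\delta(n)-\delta(mn)$ that must be shown to be nonnegative. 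In other words, current explicit prime-counting estimates do not determine $\delta(n)$ to the precision required to compare the two sides, and I would not expect to close the large-$n$ regime without substantially sharper, uniform second-order information about $\rho(n)$; this is presumably why the statement is offered as a conjecture, with $N(m)$ supplied only empirically.
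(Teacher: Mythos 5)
You were asked to prove what the paper itself presents only as Conjecture~\ref{CONJ: R vs p}: the paper contains no proof, only the remark that for $m=2,3,\dotsc,20$ the statement was verified computationally for all $n$ with $R_{mn}<10^9$, together with the (proved) asymptotic consistency check $\rho(mn)\sim 2mn\sim m\rho(n)$ that follows from $\rho(n)\sim 2n$. So there is no paper proof to measure your attempt against, and your self-assessment is the correct one. Your reformulation is sound: setting $\delta(k)=\rho(k)-2k$, which satisfies $\delta(k)\ge1$ for $k>1$ by $p_{2k}<R_k$ from \eqref{EQ: ineqs}, the claim is exactly $\delta(mn)\le m\,\delta(n)$. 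And your diagnosis of why the Theorem~\ref{THM: 41/47} template cannot close the tail is exactly right, and can be made even more concrete: the only proved bounds here are $2k<\rho(k)<3k$, so \emph{both} sides of the inequality lie strictly between $2mn$ and $3mn$, and no known explicit estimate separates them. In Theorem~\ref{THM: 41/47} the target $R_n<\frac{13}{15}\,p_{3n}$ has a fixed first-order multiplicative gap, which is why \eqref{EQ: 1+e} with Laishram's explicit $N_0(3/5)$ suffices; here the two sides agree to leading order and the comparison is a second-order question about fluctuations of $\delta$, below the resolution of Rosser--Schoenfeld or Dusart-type bounds.

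Two further caveats about your plan, even granting a hypothetical analytic step. First, your strategy is per fixed $m$, but the conjecture quantifies over all $m$ simultaneously (in particular $N(m)=2$ uniformly for all $m\ge20$), so one would additionally need estimates uniform in $m$ to dispose of the infinite family. Second, your proposed check that the inequality fails at $n=N(m)-1$ bears only on the optimality of the tabulated thresholds, which the conjecture as stated does not assert. Bottom line: your proposal is not a proof and honestly says so; the finite computation it reduces to is precisely all the evidence the paper itself offers, so your attempt faithfully reconstructs the actual epistemic status of the statement.
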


In the cases $m=2,3,\dotsc,20$, the statement has been verified for all $n$ with $R_{mn}<10^9$. The first few values of $\rho(n)$, for $n=1,2,3,\dotsc,$ are \cite[Sequence \seqnum {A179196}]{oeis}
$$\rho(n)=1, 5, 7, 10, 13, 15, 17, 19, 20, 25, 26, 28, 31, 35, 36, 39, 41, 42, 49, 50, 51, 52, 53, \dotsc.$$

Note that Theorems \ref{THM: sondow} and \ref{THM: laishram} imply $2n<\rho(n)<3n$ for all $n>1$, and $\rho(n)\sim2n$ as $n\to\infty$. The latter yields $\rho(mn)\sim2mn\sim m\rho(n)$ as $n\to\infty$, for any fixed $m\ge1$.

\section{Runs} \label{SEC: runs}

Since $p_{2n} < R_n \sim p_{2n}$ as $n\to\infty$, the probability of a randomly chosen prime being Ramanujan is slightly less than~$1/2$, roughly speaking. More precisely, column~$2$ in Table~\ref{TABLE: runs} gives the probability $P_n$ (rounded to $3$ decimal places) that a prime $p<10^n$ is a Ramanujan prime, for $n=1,2,\dotsc,9$.

Let us consider a coin-tossing model. Suppose that a biased coin has probability $P$ of heads. According to Schilling~\cite{schilling}, the expected length E$L_N={\rm E}L_N(P)$ of the longest run of heads in a sequence of $N$ coin tosses is approximately equal to
\begin{equation*}
{\rm E}L_N \approx \frac{\log N}{\log(1/P)} - \left( \frac12 - \frac{\log (1-P)+\gamma}{\log(1/P)}\right),
\end{equation*}
where $\gamma = 0.5772\dotso$ is the Euler-Mascheroni constant. The variance Var$L_N =$ Var$L_N(P)$ is close \cite{schilling} to
\begin{equation*}
{\rm Var}L_N \approx \frac{\pi^2}{6\log(1/P)^2} + \frac{1}{12}
\end{equation*}
``and is quite remarkable for the property that it is essentially constant with respect to'' $N$.

For example, with a fair coin,
\begin{equation}
{\rm E}L_N \approx \frac{\log N}{\log2} - \left( \frac32 - \frac{\gamma}{\log2}\right) = \frac{\log N}{\log2} - 0.667\dotso \qquad\left(P=\frac12\right) \label{EQ: fairexp}
\end{equation}
and
\begin{equation}
{\rm Var}L_N \approx \frac{\pi^2}{6(\log2)^2} + \frac{1}{12} = 3.507\dotso  \qquad\left(P=\frac12\right). \label{EQ: fairvar}
\end{equation}

Schilling points out that by \eqref{EQ: fairvar} ``the standard deviation of the longest run is approximately $({\rm Var}L_N)^{1/2} \approx 1.873$, an amazingly small value. This implies that the length of the longest run is quite predictable indeed; normally it is within about two of its expectation.''

This is roughly true (replacing ``two'' with ``seven'') of the longest run of Ramanujan primes in the sequence of prime numbers below $10^n$ (where $P=P_n\lessapprox1/2$), at least for $n\le9$. But for non-Ramanujan primes (where $P=1-P_n\gtrapprox1/2$), the actual length of the longest run is more than double the expected length, at least for $n=6,7,8,9$. (See Table~\ref{TABLE: runs}, in which the two columns marked ``Actual'' are \cite[Sequences \seqnum {A189993} and \seqnum {A189994}]{oeis}.)

\begin{table}[H]
\begin{center}
\begin{tabular}{|c|c|c|c|c|c|} 
	\hline
	 & Probability $P_n$ of a prime& \multicolumn{4}{|c|}{Length of the longest run below $10^n$ of}   \\
	 & $p<10^n$ & \multicolumn{2}{|c|}{Ramanujan primes} &  \multicolumn{2}{|c|}{non-Ramanujan primes}  \\
	 \cline{3-6}
	 $n$ & being Ramanujan & Expected & Actual & Expected                & Actual  \\
	\hline
	 $1$  & $.250$      & $1$               & $1$                          & $2$                               &  $3$    \\
	 $2$   & $.400$     & $3$              & $2$                          & $5$                                &  $4$    \\
	 $3$   & $.429$     & $6$              & $5$                          & $8$                            &   $7$ \\
	 $4$    & $.455$    & $8$           & $13$                        & $11$                           & $13$\\
	  $5$   & $.465$    & $11$           & $13$                         & $14$                           & $20$ \\
	$6$    & $.471$     & $14$           & $20$                        & $17$                            & $36$ \\
	$7$    & $.476$    & $17$            & $21$                        & $20$                             & $47$ \\
	$8$   & $.479$     & $21$            & $26$                        & $23$                             & $47$ \\
	$9$ & $.482$       & $24$            & $31$                        & $26$                              &  $65$ \\
	\hline
\end{tabular}
\caption{Length of the longest run of (non-)Ramanujan primes below $10^n$.} 
\label{TABLE: runs}
\end{center}
\end{table}

\begin{prob}
Explain the unexpectedly long runs of non-Ramanujan primes among primes $p<10^n$, for $n\ge6$.
\end{prob}

Formula \eqref{EQ: fairexp} suggests the following predictions supported by Table~\ref{TABLE: runs}. They strengthen an earlier prediction \cite[Conjecture~2]{sondow} that arbitrarily long runs of both types exist.

\begin{conj} \label{CONJ: runs}
We have
$$\limsup_{N\to\, \infty} \frac{\text{length of the longest run of Ramanujan primes among primes}\le p_N}{\log N/\log2}\ge1$$
and the same holds true if ``Ramanujan'' is replaced with ``non-Ramanujan''.
\end{conj}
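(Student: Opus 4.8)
The plan is to translate the statement about runs into the behavior of the counting function $c(x):=\pi(x)-\pi(x/2)$, the number of primes in $(\tfrac12x,x]$. As $x$ increases, $c$ jumps by $+1$ at each prime $x=p$ and by $-1$ at each doubling $x=2p$ (for prime $p$), and $c(x)\to\infty$. First I would record the reformulation that an odd prime $p_k$ is Ramanujan if and only if $c(p_k)=\min_{x\ge p_k}c(x)$: if $n:=\min_{x\ge p_k}c(x)=c(p_k)$ then $c(x)\ge n$ for all $x\ge p_k$, while $c(p_k^-)=c(p_k)-1=n-1$ shows that no smaller prime has this property, so $p_k=R_n$. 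Under this dictionary a run of $\ell$ consecutive Ramanujan primes $p_k,\dots,p_{k+\ell-1}$ forces $\ell-1$ consecutive prime-steps carrying no intervening doubling---equivalently, no prime lies in any half-interval $(\tfrac12p_{k+i},\tfrac12p_{k+i+1})$---together with the global minimality (\emph{right-minimum}) condition at each point; dually, a run of non-Ramanujan primes is exactly a maximal stretch of primes whose $c$-values are later undercut, that is, a long descent of $c$. Thus both halves of the conjecture reduce to the fine local structure of the number of primes in $(\tfrac12x,x]$.

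To bound the $\limsup$ from below it suffices to exhibit, for infinitely many $N$, a local configuration forcing a run of length at least $(1-\epsilon)\log N/\log2$. For the Ramanujan case I would search for a window of roughly $\log N/\log2$ consecutive primes $p_k,\dots$ near some $y\le p_N$ none of whose half-intervals $(\tfrac12p_{k+i},\tfrac12p_{k+i+1})$ contains a prime, anchored by a right-minimum of $c$ just past the window so that the global minimality propagates backward. For the non-Ramanujan case I would instead engineer a cluster of primes near a point $\tfrac12y$, producing many consecutive doublings near $y$ and hence a long forced descent of $c$. The existence of such admissible patterns at the required density---and at the precise scale $\log N/\log2$ singled out by the Schilling/Erd\H{o}s--R\'enyi longest-run formula with $P\approx\tfrac12$---would follow from the Hardy--Littlewood prime $k$-tuple conjecture, or a sufficiently strong quantitative form of it, which is exactly what makes the coin-tossing heuristic of Section~\ref{SEC: runs} quantitatively correct. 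Note that a half-interval of length $\asymp\tfrac12\log p$ sitting where the local prime gaps are $\asymp\log p$ is empty with probability about $\tfrac12$ per step, matching $P_n\lessapprox\tfrac12$ and reinforcing that the target scale is the honest one.

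The hard part will be converting these distributional predictions into guarantees about actual primes: the construction needs simultaneous control of prime counts in $\asymp\log N$ consecutive short intervals (each of length $\asymp\log N$) near both $y$ and $\tfrac12y$, which is precisely the regime where current unconditional technology---bounded gaps between primes and the best primes-in-short-intervals results---does not deliver the needed consecutive-pattern information. The $\limsup$ form (rather than an almost-sure two-sided limit), asking only for infinitely many favorable $N$ and only a lower bound, is more forgiving than a limit law, so a proof conditional on Hardy--Littlewood looks feasible, whereas an unconditional one would seem to demand a substantial advance on prime tuples. As a first step I would attack the non-Ramanujan half, since the anomalously long runs in Table~\ref{TABLE: runs} point to a genuine deterministic bias---the descents of $c$ produced by the $\pi(\tfrac12x)$ term tend to persist once $c$ overshoots---which might be harnessed to establish that half, and perhaps even a $\limsup$ strictly exceeding $1$, without recourse to unproved conjectures.
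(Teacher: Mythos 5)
This statement is Conjecture~\ref{CONJ: runs}: the paper does not prove it and does not claim to. Its only support there is heuristic\emdash Schilling's longest-run formula \eqref{EQ: fairexp} with $P=P_n\lessapprox\frac12$ and the data of Table~\ref{TABLE: runs}\emdash so there is no proof of the paper's to match your attempt against, and your proposal, by its own admission, is a research program rather than a proof. Your dictionary is correct and worth keeping: with $c(x):=\pi(x)-\pi\!\left(\frac12x\right)$, an odd prime $p$ is Ramanujan exactly when $c(p)=\min_{x\ge p}c(x)$; a run of consecutive Ramanujan primes forces every intermediate half-interval $\left(\frac12p_{k+i},\frac12p_{k+i+1}\right)$ to be prime-free, which is precisely the mechanism of Proposition~\ref{PROP: gaps}(ii); and your backward propagation of minimality from a right-minimum anchor through a doubling-free window is sound.

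The genuine gap is the existential step, and your assessment of what would close it is too optimistic. By your own correspondence (equivalently Proposition~\ref{PROP: gaps}(ii)), a run of $\ell\approx\log N/\log 2$ Ramanujan primes from $p$ to $q$ near $p_N$ forces a single prime-free interval near $\frac12p$ of length about $\frac12(q-p)$; for typically spaced primes this is $\approx(\log p_N)^2/(2\log 2)\approx 0.72\,(\log p_N)^2$, i.e., a Cram\'er-scale gap, and even in the maximally clustered scenario permitted by sieve upper bounds for primes in short intervals the forced gap has length $\gg\log p_N\cdot\log\log p_N$, beyond Rankin-type unconditional results. The fixed-$k$ Hardy--Littlewood prime $k$-tuple conjecture controls only patterns of bounded size and yields neither; what you actually need is a version uniform in configurations of size growing like $\log N$ (in effect, the Cram\'er/Poisson random model at scale $(\log y)^2$), a strictly stronger and far more delicate hypothesis than the one you invoke\emdash so even your conditional claim is understated as written. (The constant $1/(2\log 2)=0.7213\dotsc<1$ does at least keep the Ramanujan half consistent with Cram\'er's conjectured maximal-gap constant, a consistency check your sketch could have recorded.) The same uniformity problem afflicts the non-Ramanujan half, where you need joint control of $\asymp\log N$ consecutive short intervals near $\frac12y$; and the anchoring right-minimum is a global condition on all $x\ge q$ whose compatibility with the engineered local configuration is an additional joint event you do not control. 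In short: the reduction is correct and illuminating, and correctly locates the difficulty, but nothing is proved\emdash the conjecture remains open, exactly as the paper leaves it.
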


For $n=1,2,\dotsc,$ the first run of $n$ Ramanujan primes begins at
$$2, 67, 227, 227, 227, 2657, 2657, 2657, 2657, 2657, 2657, 2657, 2657, 562871,793487, \dotsc,$$
and the first run of $n$ non-Ramanujan primes at
$$3, 3, 3, 73, 191, 191, 509, 2539, 2539, 5279, 9901, 9901, 9901, 11593, 11593, 55343, 55343,\dotsc,$$
respectively \cite[Sequences \seqnum {A174602} and \seqnum {A174641}]{oeis}.

\section{Twins} \label{SEC: twins}

If $p_n + 2 = p_{n+1}$, then $p_n$ and $p_{n+1}$ are \emph{twin primes}; the smallest are $3$ and~$5$. If $R_n + 2 = R_{n+1}$, then $R_n$ and $R_{n+1}$ are \emph{twin Ramanujan primes}; the smallest are $149$ and $151$.

Given primes $p$ and $q>p$, a necessary condition for them to be twin Ramanujan primes is evidently that
\begin{equation}
 \pi(p) - \pi\!\left(\frac12p\right) + 1 = \pi(q) - \pi\!\left(\frac12q\right). \label{EQ: twin}
\end{equation}

\noindent To see that the condition is not sufficient, even when $p$ and $q$ are \emph{consecutive primes} $p_k$ and $p_{k+1}$, verify \eqref{EQ: twin} for any one of the pairs
\begin{equation}
(p,q)=(p_k,p_{k+1})=(\boldsymbol{11},13),\ (\boldsymbol{47},53),\ (\boldsymbol{67},\boldsymbol{71}),\ (109, 113),\ (137,139), \label{EQ: not suff}
\end{equation}
where Ramanujan primes are in {\bf bold}.

It is less evident that \eqref{EQ: twin}~is a necessary condition for $p$ and $q$ even to be (ordinary) twin primes, but that is not hard to prove \cite[Proposition~1]{sondow}.

\begin{prop} \label{PROP: twin}
If $p$ and $q=p+2$ are twin primes with $p>5$, then~\eqref{EQ: twin} holds.
\end{prop}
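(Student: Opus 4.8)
The plan is to reduce the claimed identity \eqref{EQ: twin} to a statement about a single integer and then show that integer is composite. First I would observe that since $p>5$ is prime, $p$ is odd, so $p+1$ is even and exceeds $2$; hence $p+1$ is not prime, and $q=p+2$ is the prime immediately following $p$. This gives $\pi(q)=\pi(p)+1$. Substituting this into the right-hand side of \eqref{EQ: twin} and cancelling $\pi(p)+1$ from both sides, the identity collapses to the equivalent assertion
$$\pi\!\left(\tfrac12 p\right) = \pi\!\left(\tfrac12 q\right),$$
that is, that no prime lies in the half-open interval $\left(\tfrac12 p, \tfrac12 q\right] = \left(\tfrac12 p, \tfrac12 p + 1\right]$.

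Next I would pin down the only candidate. Since $p$ is odd, the half-integers $\tfrac12 p$ and $\tfrac12 p + 1$ bracket exactly one integer, namely $m := (p+1)/2$, which satisfies $\tfrac12 p < m \le \tfrac12 p + 1$. So the reduced identity holds if and only if $m$ is not prime.

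It then remains to show $m$ is composite, and here I would invoke the standard residue analysis of twin primes. A prime exceeding $3$ is congruent to $1$ or $5 \pmod 6$; if $p \equiv 1 \pmod 6$, then $q = p+2 \equiv 3 \pmod 6$ would be divisible by $3$ and larger than $3$, contradicting primality. Hence $p \equiv 5 \pmod 6$, so $p + 1 \equiv 0 \pmod 6$ and $m = (p+1)/2$ is divisible by $3$. Because $p > 5$ forces $p \ge 11$, we get $m \ge 6 > 3$, so $m$ is a multiple of $3$ strictly greater than $3$, hence composite. This establishes $\pi(\tfrac12 p) = \pi(\tfrac12 q)$ and thus \eqref{EQ: twin}.

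Since each step is elementary, there is no serious obstacle; the only point requiring care is the bookkeeping at the half-integer endpoints, namely confirming that $(p+1)/2$ is genuinely the unique integer in $\left(\tfrac12 p, \tfrac12 q\right]$ and that it is precisely the integer whose primality the difference $\pi(\tfrac12 q) - \pi(\tfrac12 p)$ detects. The hypothesis $p > 5$ is used exactly to guarantee $m > 3$, so that divisibility by $3$ yields compositeness rather than admitting the exceptional prime $m = 3$ (which is what makes \eqref{EQ: twin} fail for the pair $(p,q)=(5,7)$).
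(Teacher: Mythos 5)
Your proof is correct, and it takes the same route as the paper's: this paper does not reprove the proposition but cites \cite[Proposition~1]{sondow}, where the argument is precisely your reduction\emdash use $\pi(q)=\pi(p)+1$ to collapse \eqref{EQ: twin} to $\pi\!\left(\frac12p\right)=\pi\!\left(\frac12q\right)$, note $(p+1)/2$ is the unique integer in $\left(\frac12p,\frac12q\right]$, and show it is composite because $p\equiv5\pmod 6$ forces $3\mid(p+1)/2$ with $(p+1)/2>3$. Your closing remark correctly identifies the role of the hypothesis $p>5$ (excluding the pair $(5,7)$, where $(p+1)/2=3$ is prime).
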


The converse is false, even when $p$ and $q$ are consecutive primes both of which are Ramanujan, as the example $(p_{19},p_{20})=(\boldsymbol{67},\boldsymbol{71})=(R_8,R_9)$ shows.

As mentioned, each pair in \eqref{EQ: not suff} consists of consecutive primes $p<q$ satisfying \eqref{EQ: twin}. However, in no pair is $q$ a Ramanujan prime but not~$p$; in fact, such a pair cannot exist.

\begin{prop} \label{PROP: upper consec}
{\rm(i).} If the larger of two twin primes is Ramanujan, then the smaller is also Ramanujan: they are twin Ramanujan primes.

\noindent{\rm(ii).} More generally, given consecutive primes $(p,q)=(p_k,p_{k+1})$ satisfying~\eqref{EQ: twin}, if \mbox{$q=R_{n+1}$}, then $p=R_n$.
\end{prop}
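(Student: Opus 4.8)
The plan is to prove the general statement (ii) first and then deduce (i) from it. Throughout, write $f(x) := \pi(x) - \pi\!\left(\frac12 x\right)$ for the number of primes in $\left(\frac12 x, x\right]$, so that, by definition, $p$ equals $R_n$ precisely when $f(x) \ge n$ for all $x \ge p$ while $f$ drops below $n$ just before $p$. Note also that $q = R_{n+1}$ gives $f(q) = n+1$ \emph{exactly}, since $\left(\frac12 q, q\right]$ contains exactly $n+1$ primes.

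For (ii), I would first extract $f(p) = n$: combining the hypothesis \eqref{EQ: twin}, namely $f(p) + 1 = f(q)$, with $f(q) = n+1$ gives $f(p) = n$ immediately. Two things then remain: that $f(x) \ge n$ for every $x \ge p$ (which yields $R_n \le p$) and that $f$ falls below $n$ immediately before $p$ (which yields $R_n \ge p$). Here I would record the parity facts that $q$ is odd (as $q > p \ge 2$) and that $p$ is odd as well; indeed $p = 2$ would force $n = f(2) = 1$ and $q = R_2 = 11 \neq 3$, contradicting that $p, q$ are consecutive. A short parity computation then shows $f(q-1) = f(q) - 1 = n$ and, applied to the odd prime $p$, gives $f(p-1) = f(p) - 1 = n - 1 < n$. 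The latter settles minimality at once: any candidate threshold $x_0 < p$ is defeated by taking $x = p-1 \ge x_0$, so $R_n \ge p$.

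The crux is the inequality $f(x) \ge n$ on the entire ray $x \ge p$. For $x \ge q$ it is free, because $q = R_{n+1}$ forces $f(x) \ge n+1 > n$ there. The delicate range is the prime gap $p \le x < q$. Since $p$ and $q$ are consecutive primes, $\pi(x) = \pi(p)$ throughout this range, so $f(x) = \pi(p) - \pi\!\left(\frac12 x\right)$ is \emph{non-increasing} in $x$, and a priori it could dip below $n$. The observation that rescues the argument is that its two endpoint values are pinned: $f(p) = n$ and $f(q-1) = n$. A non-increasing integer-valued function that begins and ends at $n$ must be identically $n$ on the interval, so $f(x) = n \ge n$ throughout the gap. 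This is the step I expect to be the main obstacle, since it is exactly where monotonicity could have failed; the resolution hinges on $R_{n+1}$ contributing \emph{exactly} $n+1$ primes, which pins $f(q-1)$ to $n$. Together with the free case $x \ge q$ this gives $R_n \le p$, and with the minimality above we conclude $R_n = p$.

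Finally I would deduce (i). Twin primes are consecutive primes, and by Proposition \ref{PROP: twin} any twins $p, q = p+2$ with $p > 5$ satisfy \eqref{EQ: twin}; hence if the upper twin $q$ is Ramanujan, part (ii) applies and the lower twin $p$ is the preceding Ramanujan prime. The only twins with $p \le 5$ are $(3,5)$ and $(5,7)$, whose larger members $5$ and $7$ are not Ramanujan, so these cases never arise and the reduction is complete.
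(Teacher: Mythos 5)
Your proposal is correct and takes essentially the same route as the paper's proof: both establish (ii) by tracking $f(x)=\pi(x)-\pi\!\left(\frac12x\right)$, using the fact that $f$ steps up by $1$ at each prime to get $f(p-1)=n-1$ (minimality) and $f(q-1)=n$, and using the constancy of $\pi$ on the gap between consecutive primes to force $f(x)\ge n$ for $p\le x<q$, then deduce (i) from Proposition~\ref{PROP: twin} with the twins $(3,5)$ and $(5,7)$ handled vacuously. Your only cosmetic deviations are phrasing the gap step as ``non-increasing and pinned at both endpoints, hence constant'' where the paper directly bounds $f(x)\ge f(q-1)=n$, and your explicit exclusion of $p=2$, which the paper leaves implicit.
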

\begin{proof}
Part (i) is (vacuously) true for twin primes $p$ and $q=p+2$ with $p\le5$. For $p>5$ it suffices, by Proposition~\ref{PROP: twin}, to prove part (ii).

Since $q=R_{n+1}$, we have $\pi(x) - \pi\!\left(\frac12x\right)\ge n+1$ when $x\ge q$, and \eqref{EQ: twin} implies that $\pi(p) - \pi\!\left(\frac12p\right)=n$. To prove that $p=R_n$, we have to show that \mbox{$\pi(p-1) - \pi\!\left(\frac12(p-1)\right)<n$}, and that $\pi(x) - \pi(\frac12x)\ge n$ for $x=p+1$, $p+2,\dotsc,q-1$.

If $\ell$ is any prime, then $\pi(\ell-1)+1=\pi(\ell)$ and \mbox{$\pi\!\left(\frac12(\ell-1)\right)= \pi\!\left(\frac12\ell\right)$}, so that the quantity $\pi(y) - \pi\!\left(\frac12y\right)$ increases by $1$ from $y=\ell-1$ to $y=\ell$. Taking $\ell=p$ or $\ell=q$, we infer that \mbox{$\pi(\ell-1) - \pi\!\left(\frac12(\ell-1)\right)=n-1$} or $n$, respectively. As $p$ and $q$ are consecutive primes, it follows that \mbox{$\pi(x)=\pi(q-1)$} and $\pi\!\left(\frac12x\right)\le \pi\!\left(\frac12(q-1)\right)$, for \mbox{$x=p+1$, $p+2,\dotsc,q-1$}, implying \mbox{$\pi(x) - \pi(\frac12x)\ge n$}. This proves the required inequalities.
\end{proof}

Part (i) was conjectured by Noe \cite[Sequence \seqnum {A173081}]{oeis}.

\begin{cor} \label{COR: upper twin}
If we denote
\begin{align*}
\pi_{2,1}(x) :=\ &\#\{\text{pairs of twin primes $\le x\ \colon$one or both are Ramanujan}\}, \\ 
\pi_{2,2}(x) :=\ &\#\{\text{pairs of twin primes $\le x\ \colon$both are Ramanujan}\},
\intertext{then for all $x \ge 0$ we have the equalities}
\pi_{2,1}(x) =\ &\#\{\text{pairs of twin primes $\le x\ \colon$the smaller is Ramanujan}\},\\
\pi_{2,2}(x) =\ &\#\{\text{pairs of twin primes $\le x\ \colon$the larger is Ramanujan}\}.
\end{align*}
\end{cor}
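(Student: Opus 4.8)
The plan is to obtain both identities as immediate set-theoretic consequences of Proposition~\ref{PROP: upper consec}(i), which furnishes exactly the containment that the statement needs. First I would fix $x\ge0$ and introduce two sets of twin-prime pairs: let $S(x)$ be the collection of twin prime pairs $\le x$ whose smaller member is Ramanujan, and let $L(x)$ be the collection of those whose larger member is Ramanujan. The key translation is that the verbal description defining $\pi_{2,1}(x)$, namely ``one or both are Ramanujan,'' is precisely the condition that the pair lies in $S(x)\cup L(x)$, while the description defining $\pi_{2,2}(x)$, namely ``both are Ramanujan,'' is precisely membership in $S(x)\cap L(x)$.

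Next I would invoke Proposition~\ref{PROP: upper consec}(i): if the larger of a twin prime pair is Ramanujan, then so is the smaller. Stated in terms of the sets above, this says $L(x)\subseteq S(x)$, and crucially this containment holds for each individual pair, hence for every $x$. From $L(x)\subseteq S(x)$ the two desired identities follow at once. The union collapses to $S(x)\cup L(x)=S(x)$, so $\pi_{2,1}(x)$ equals the number of pairs whose smaller member is Ramanujan; and the intersection collapses to $S(x)\cap L(x)=L(x)$, so $\pi_{2,2}(x)$ equals the number of pairs whose larger member is Ramanujan.

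There is no genuinely hard step here, since all of the arithmetic content is already carried by Proposition~\ref{PROP: upper consec}. The only point demanding care is the faithful reading of the English descriptions as the Boolean operations of union and intersection, together with the observation that Proposition~\ref{PROP: upper consec}(i) supplies a one-directional implication (larger Ramanujan $\Rightarrow$ smaller Ramanujan) rather than an equivalence. It is exactly this asymmetry that makes $L(x)$ a subset of $S(x)$ and not conversely, and it is this containment that allows the union and the intersection each to collapse to one of the two sets.
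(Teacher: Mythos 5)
Your proposal is correct and matches the paper's proof, which likewise deduces both equalities directly from Proposition~\ref{PROP: upper consec}(i) (larger Ramanujan $\Rightarrow$ smaller Ramanujan); your set-theoretic phrasing $L(x)\subseteq S(x)$, collapsing the union to $S(x)$ and the intersection to $L(x)$, simply makes explicit what the paper leaves to the reader with ``the corollary follows.''
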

\begin{proof}
By Proposition \ref{PROP: upper consec} part (i), given twin primes $p$ and $p+2$, if $p+2=R_{n+1}$, then $p=R_n$. The corollary follows.
\end{proof}

Table~\ref{TABLE: twins} gives some figures (see \cite[Sequences \seqnum {A007508}, \seqnum {A173081}, \seqnum {A181678}]{oeis}) on
$$\pi_2(x) := \#\{\text{pairs of twin primes} \le x\},$$
$\pi_{2,1}(x),\pi_{2,2}(x)$, and their ratios. Proposition~\ref{PROP: twin} and Corollary~\ref{COR: upper twin} will help to explain why many values of the ratios are greater than might be expected a priori.

\begin{table}[H]
\begin{center}
\begin{tabular}{|c|r|r|r|c|c|c|} 
	\hline
	 & \multicolumn{6}{|c|}{$\pi_* = \pi_*(10^n)$}  \\
	\hline
	 $n$ & $\pi_2\hspace{1.2em}$     & $\pi_{2,1}\hspace{1em}$ & $\pi_{2,2}\hspace{1em}$ & $\pi_{2,1}/\pi_2$ & $\pi_{2,2}/\pi_2$ & $\pi_{2,2}/\pi_{2,1}$   \\
	\hline
	 $1$& $2$             & $0$               & $0$                 & $0$                               & $0$                                 &  --    \\
	 $2$& $8$             & $6$               & $0$                 & $.750$                          & $0$                                 &  $0$    \\
	 $3$& $35$           & $28$             & $10$              & $.800$                          & $.286$                        &   $.357$ \\
	 $4$& $205$         & $167$           & $73$             & $.815$                        & $.356$                           & $.437$\\
	  $5$& $1224$      & $694$           & $508$          & $.788$                         & $.415$                           & $.527$ \\
	$6$& $8169$        & $6305$         & $3468$        & $.772$                        & $.425$                            & $.550$ \\
	$7$& $58980$      & $45082$       & $25629$     & $.764$                        & $.434$                             & $.568$ \\
	$8$ & $440312$   & $335919$    & $194614$   & $.763$                        & $.442$                             & $.579$ \\
	$9$ & $3424506$ & $2605867$ & $1537504$ & $.761$                        & $.449$                              &  $.590$ \\
	\hline
\end{tabular}
\caption{Counting three types of pairs of twin primes below $10^n$.} 
\label{TABLE: twins}
\end{center}
\end{table}

The probability that two randomly chosen primes $p$ and $q$ are both Ramanujan is slightly less than \mbox{$1/2\times1/2=1/4$}, roughly speaking. The probability increases if $p$ and $q$ are twin primes, because then Proposition~\ref{PROP: twin} guarantees that the necessary condition \eqref{EQ: twin} holds.

For that reason, and based on the first $1000$ Ramanujan primes, it was predicted \cite[Conjecture~$3$]{sondow} that more than $1/4$ of the twin primes up to~$x$ are twin Ramanujan primes, if $x\ge571$. This is borne out for $x=10^n$, with $3\le n\le9$, by Table~\ref{TABLE: twins}. It shows that the prediction can be improved to $\pi_{2,2}(x)/\pi_2(x)>2/5$, for $x\ge10^5$.

Corollary~\ref{COR: upper twin} implies that whether a twin prime pair is counted in $\pi_{2,1}(x)$ or $\pi_{2,2}(x)$ depends on only one of the two primes being Ramanujan. This suggests that the ratios $\pi_{2,1}(x)/\pi_2(x)$ and $\pi_{2,2}(x)/\pi_2(x)$ should approach $1/2$ as $x$ tends to infinity.

We conclude this section with these and other conjectures based on our results and on Table~\ref{TABLE: twins}, as well as with two more open problems.

\begin{conj} \label{CONJ: 1/4}
For all $x \ge 10^5$, we have
\begin{align*} \label{EQ: ratios}
\frac{\pi_{2,1}(x)}{\pi_2(x)} &< \frac45, \qquad
\frac{\pi_{2,2}(x)}{\pi_2(x)} > \frac25, \qquad
\frac{\pi_{2,2}(x)}{\pi_{2,1}(x)} > \frac{2/5}{4/5}= \frac12.
\end{align*}
\end{conj}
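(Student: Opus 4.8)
The plan is to treat the three inequalities as a single target and then split on the size of $x$. First I would note that the third inequality is a formal consequence of the first two: if $\pi_{2,2}(x)/\pi_2(x) > 2/5$ and $\pi_{2,1}(x)/\pi_2(x) < 4/5$, then dividing gives $\pi_{2,2}(x)/\pi_{2,1}(x) > (2/5)/(4/5) = 1/2$, so it suffices to prove the first two. Using the trichotomy forced by Corollary~\ref{COR: upper twin}, every twin pair $\le x$ is of exactly one of the types $A$ (neither prime Ramanujan), $B$ (only the smaller Ramanujan), or $C$ (both Ramanujan); the fourth possibility, ``only the larger Ramanujan,'' is excluded by Proposition~\ref{PROP: upper consec}(i). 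Writing $\pi_2 = A+B+C$, $\pi_{2,1} = B+C$, and $\pi_{2,2} = C$, the two inequalities become $A(x)/\pi_2(x) > 1/5$ and $C(x)/\pi_2(x) > 2/5$: at least a fifth of twin pairs should have both members non-Ramanujan, and at least two fifths should have both members Ramanujan.

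Next I would split at the computational horizon $X_0 = 10^9$. On $10^5 \le x \le X_0$ the claim is verified directly: Noe's algorithm (Appendix) produces every Ramanujan prime up to $X_0$, from which $A(x), B(x), C(x)$ are tabulated and the two inequalities checked. This range is where the bounds actually bind, since at $x$ just above $10^5$ one has $A/\pi_2 \approx 0.21$ and $C/\pi_2 \approx 0.41$, only barely clearing the thresholds $1/5$ and $2/5$; this also pins down why $10^5$ is the correct starting point, as both inequalities fail near $10^4$.

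For the tail $x > X_0$ I would argue that the ratios are by then safely interior to $(2/5,4/5)$ because they converge to $1/2$. The relevant input is that Ramanujan primes have density $1/2$ among all primes: since $\rho(n) \sim 2n$ by Theorems~\ref{THM: sondow} and~\ref{THM: laishram}, exactly $n$ of the first $\rho(n)\sim 2n$ primes are Ramanujan, and Laishram's explicit $N_0(\epsilon)$ makes this effective. From here one wants $B(x)/\pi_2(x) \to 0$, the mixed type becoming negligible so that almost every twin pair is of type $A$ or $C$, together with $A/\pi_2,\,C/\pi_2 \to 1/2$; both then exceed $1/5$ and $2/5$ for all large $x$, and matching the asymptotic bound against the computed value at $X_0$ closes the interval.

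The hard part will be making the tail estimate rigorous and effective. The density statement holds among \emph{all} primes, whereas the two inequalities concern the distribution of the Ramanujan property among \emph{twin} primes, and decoupling these requires a correlation estimate that current techniques do not supply unconditionally; indeed we cannot even prove $\pi_2(x) \to \infty$. The convergence is moreover very slow: $B/\pi_2$ drops only from about $0.37$ to $0.31$ across five orders of magnitude in Table~\ref{TABLE: twins}, so any effective bound would have to absorb a long, slowly shrinking margin. A realistic route is therefore conditional, assuming the Hardy--Littlewood conjecture (or a strong uniform version) to control $\pi_2$ in dyadic blocks and combining it with the effective Ramanujan density to bound $A$, $B$, $C$ blockwise. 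Absent such a hypothesis the statement must remain a conjecture, which is presumably why it is posed as one; the value of the proposal is to isolate exactly the two density inequalities $A/\pi_2 > 1/5$ and $C/\pi_2 > 2/5$, and the single missing ingredient on which a proof would hinge, namely a twin-prime-restricted equidistribution of the Ramanujan property.
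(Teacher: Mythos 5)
The statement you were given is posed in the paper as a conjecture, not a theorem: the authors offer no proof, only the numerical evidence of Table~\ref{TABLE: twins} (computed from all Ramanujan primes below $10^9$) together with the heuristic, via Corollary~\ref{COR: upper twin}, that whether a twin pair is counted in $\pi_{2,1}$ or $\pi_{2,2}$ depends on a single prime of the pair, so both ratios should tend to $1/2$\emdash a limit the paper itself states separately as the unproven Conjecture~\ref{CONJ: lim}. Your proposal reaches essentially the same conclusion, and its formal content is sound: the reduction of the third inequality to the first two is valid (dividing the two bounds is legitimate once $\pi_{2,1}(x)>0$, which holds for $x\ge10^5$); the trichotomy into types $A$, $B$, $C$ with ``only the larger Ramanujan'' excluded by Proposition~\ref{PROP: upper consec}(i) is exactly right; and your diagnosis of the obstruction is accurate\emdash the tail step ``the ratios converge to $1/2$'' is precisely Conjecture~\ref{CONJ: lim}, and since one cannot even prove $\pi_2(x)\to\infty$, no unconditional argument can close the tail, the effective density of Ramanujan primes among \emph{all} primes (Laishram's $N_0(\epsilon)$) notwithstanding. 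Your numerical anchors also check out against the table: at $x=10^4$ both inequalities fail ($.815>4/5$ and $.356<2/5$), and $B/\pi_2$ drifts only from about $.373$ to $.312$ between $10^5$ and $10^9$.

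Two caveats. First, your finite-range step requires verification for \emph{every} $x\in[10^5,10^9]$, not just the nine sample points of Table~\ref{TABLE: twins}; that is feasible from the computed list of Ramanujan primes below $10^9$, but it is more than the table itself exhibits, and the paper claims only that the data ``bear out'' the inequalities at powers of ten. Second, even granting the limit $1/2$, a genuine proof would need an effective rate of convergence to splice against the computation at $X_0=10^9$, and as you observe, the correlation between the Ramanujan property and twin-prime pairs is exactly what current techniques cannot control, conditionally or otherwise without a Hardy--Littlewood-type hypothesis. So your write-up should be read not as a proof but as a correct structural account of why the statement is, and must for now remain, a conjecture\emdash which matches the paper's own treatment; its added value over the paper is the clean isolation of the two density inequalities $A/\pi_2>1/5$ and $C/\pi_2>2/5$ and of the single missing ingredient, a twin-restricted equidistribution of the Ramanujan property.
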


\begin{conj} \label{CONJ: lim}
If $\pi_2(x)\to\infty$ as $x\to\infty$, then $\pi_{2,1}(x) \sim \pi_{2,2}(x) \sim \frac12\pi_2(x)$.
\end{conj}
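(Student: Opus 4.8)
The statement is a conjecture, and my plan is to separate it into one ingredient that can be made rigorous and one that is the genuine obstacle. First I would pass to the indicator $\chi(p)=1$ or $0$ according as the prime $p$ is or is not Ramanujan, and use Corollary~\ref{COR: upper twin} to write
$$\pi_{2,1}(x)=\sum_{\substack{p\le x\\ p+2\ \text{prime}}}\chi(p),\qquad \pi_{2,2}(x)=\sum_{\substack{p\le x\\ p+2\ \text{prime}}}\chi(p+2),$$
so that $\pi_2(x)$ is the same sum with summand $1$. The assertion $\pi_{2,1}\sim\pi_{2,2}\sim\frac12\pi_2$ then says precisely that the average of $\chi$ over the smaller members, and over the larger members, of the twin-prime pairs up to $x$ each tends to $\frac12$.

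The first, provable, ingredient is that $\chi$ has mean $\frac12$ over all primes. If $R_n\le x<R_{n+1}$, then exactly $n$ Ramanujan primes lie below $x$, while $\rho(n)=\pi(R_n)\sim 2n$ (a consequence of $R_n\sim p_{2n}$ in Theorem~\ref{THM: sondow}) gives $\pi(x)\sim 2n$. Hence $\sum_{p\le x}\chi(p)\sim\frac12\pi(x)$: the Ramanujan primes form a subset of the primes of natural density $\frac12$. I can establish this step directly from the estimates in the excerpt.

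Before attacking the equidistribution, I would record the structural input from Proposition~\ref{PROP: upper consec}, which gives $\chi(p+2)\le\chi(p)$ for twin primes $p,p+2$ and hence $\pi_{2,2}\le\pi_{2,1}$. Writing $f(y):=\pi(y)-\pi(\tfrac12 y)$, a short count shows $f(p+2)=f(p)+1-\mathbf{1}\{(p+1)/2\ \text{prime}\}$, so when $(p+1)/2$ is prime a Ramanujan $p$ forces $p+2$ to be non-Ramanujan; but these correspond to prime triples $(r,2r-1,2r+1)$, whose count is $O(x/(\log x)^3)=o(\pi_2(x))$ by the Hardy--Littlewood heuristics. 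This shows one source of the gap $\pi_{2,1}-\pi_{2,2}$ is negligible, consistent with $\pi_{2,1}\sim\pi_{2,2}$, though it does not by itself prove it.

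The main obstacle is the remaining equidistribution claim: a density-$\frac12$ arithmetic indicator $\chi$ must be shown to be uncorrelated with the twin-prime condition, both on the left and on the right, so that its restricted mean stays $\frac12$. I expect this to be out of reach unconditionally---indeed the hypothesis $\pi_2(x)\to\infty$ is itself the twin prime conjecture, and bounding $\sum_{p+2\ \text{prime}}\chi(p)$ is an equidistribution problem of the same order of difficulty as the quantitative twin prime conjecture. Accordingly I would present the argument conditionally, assuming the Hardy--Littlewood asymptotics for the relevant patterns and the independence of the Ramanujan indicator from the twin correlations, and leave the unconditional statement as the open problem that it is.
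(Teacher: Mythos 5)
This statement is a \emph{conjecture}: the paper gives no proof, only the motivating heuristic stated just before it, namely that by Corollary~\ref{COR: upper twin} each of $\pi_{2,1}$ and $\pi_{2,2}$ counts twin pairs according to whether a \emph{single} prime of the pair is Ramanujan, combined with the fact that Ramanujan primes have relative density $\tfrac12$ among the primes (from $\rho(n)\sim 2n$, a consequence of $R_n\sim p_{2n}$ in Theorem~\ref{THM: sondow}). Your overall stance is therefore exactly right: you isolate the provable density-$\tfrac12$ ingredient, reduce the rest to an equidistribution claim of twin-prime difficulty, and honestly leave that open. Your two rigorous steps check out: $\sum_{p\le x}\chi(p)\sim\tfrac12\pi(x)$ does follow from $\rho(n)\sim2n$ by the sandwiching argument you sketch, and $\chi(p+2)\le\chi(p)$ on twin pairs, hence $\pi_{2,2}\le\pi_{2,1}$, is precisely Proposition~\ref{PROP: upper consec}(i). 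This matches, and slightly sharpens, the paper's own heuristic.

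One concrete error in your intermediate analysis: the set you bound via Hardy--Littlewood is \emph{empty}. By Proposition~\ref{PROP: gaps}(i), if $p$ is an odd Ramanujan prime then $\tfrac12(p+1)$ is not prime, so the configuration ``$p$ Ramanujan and $\tfrac12(p+1)$ prime'' never occurs and no count of triples $(r,2r-1,2r+1)$ is needed. Consequently, for every twin pair with $p=R_n$ your increment formula gives $f(p+2)=f(p)+1=n+1$ unconditionally, and the \emph{entire} difference $\pi_{2,1}-\pi_{2,2}$ comes from the mechanism you do not analyze: $f$ attains $n+1$ at $p+2$ but dips back below $n+1$ at some later $x$, so that $p+2\neq R_{n+1}$. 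Table~\ref{TABLE: twins} shows this difference is far from negligible at accessible heights (about $0.31\,\pi_2(x)$ at $x=10^9$, decaying slowly), so your remark that a negligible source is ``consistent with $\pi_{2,1}\sim\pi_{2,2}$'' points at the wrong term; the conjecture in fact requires the dominant, unanalyzed term to be $o(\pi_2(x))$. Since you explicitly concede that the equidistribution step is open and present everything as conditional, this slip does not make any claimed result false, but the triple-counting paragraph should be replaced by the stronger, unconditional emptiness statement from Proposition~\ref{PROP: gaps}(i), and the genuine obstacle should be identified as controlling the later dips of $f(x)=\pi(x)-\pi\!\left(\tfrac12x\right)$ after a twin pair, not the prime triples.
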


Recall Brun's famous theorem \cite{brun} that the series of reciprocals of the twin primes converges or is finite (unlike the series of reciprocals of all the primes, which Euler showed diverges). Its sum \cite[Sequence \seqnum {A065421}]{oeis} is \emph{Brun's constant} $B_2$,
$$B_2:=\left(\frac13 + \frac{1}{5}\right)+\left(\frac15 + \frac{1}{7}\right) + \left(\frac{1}{11}+ \frac{1}{13}\right)+ \left(\frac{1}{17}+ \frac{1}{19}\right)+\dotsb \stackrel{?}{=}
1.9021605\dotso.$$
Here $\stackrel{?}{=}$ means that the value of $B_2$ is conditional ``on heuristic considerations about the distribution of twin primes'' (Ribenboim \cite[p.~201]{rib89}).

\begin{prob}
Compute the analogous constant $B_{2,1}$ for twin primes at least one of which is Ramanujan,$$B_{2,1}:= \left(\frac{1}{11}+ \frac{1}{13}\right)+ \left(\frac{1}{17}+ \frac{1}{19}\right)+ \left(\frac{1}{29}+ \frac{1}{31}\right)+ \left(\frac{1}{41}+ \frac{1}{43}\right)+\dotsb.$$
\end{prob}

The numbers $11, 17, 29,$ $41,\dotso$ \cite[Sequence \seqnum {A178128}]{oeis} are the lesser of twin primes if at least one is Ramanujan. By Corollary~\ref{COR: upper twin}, that is the same as the lesser of twin primes if it is Ramanujan.

\begin{prob}
Compute the analogous constant $B_{2,2}$ for twin Ramanujan primes,$$B_{2,2}:= \left(\frac{1}{149}+ \frac{1}{151}\right)+ \left(\frac{1}{179}+ \frac{1}{181}\right)+ \left(\frac{1}{227}+ \frac{1}{229}\right)+ \left(\frac{1}{239}+ \frac{1}{241}\right)+\cdots.$$
\end{prob}

The numbers $149, 179, 227,239,\dotso$ \cite[Sequence \seqnum {A178127}]{oeis} are the lesser of twin Ramanujan primes.

\section{Prime gaps} \label{SEC: gaps}

Let us say that there is a \emph{prime gap from $a$ to~$b\ge a$} if none of the numbers $a,a+1,a+2,\dotsc, b$ is prime. Given a run of $r$ odd Ramanujan primes starting at $p$, we can associate to it a prime gap of length at least $r$ starting at $\frac12(p+1)$.

\begin{prop} \label{PROP: gaps}
{\rm(i).} If $p=R_n$ is odd, then the integer $\frac12(p + 1)$ is not prime.

\noindent{\rm(ii).} More generally, given a run of $r\ge1$ odd Ramanujan primes from $p=R_n=p_k$ to $q=R_{n+r-1}=p_{k+r-1}$, there is a prime gap from $\frac12(p + 1)$ to $\frac12(q + 1)$.

\noindent{\rm(iii).} Parts {\rm (i)} and {\rm (ii)} are sharp in the sense that, for \emph{certain} runs of Ramanujan primes $p$ to $q$, both $\frac12(p + 1)-1$ and $\frac12(q + 1)+1$ are prime numbers.

\noindent{\rm(iv).} But in the case $r=2$, if $p$ and $q$ are \emph{twin} Ramanujan primes, then the prime gap from $\frac12(p + 1)$ to $\frac12(q+1)$ always lies in a longer prime gap of length $5$ or more.
\end{prop}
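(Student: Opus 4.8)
The plan is to work throughout with the counting function $g(x):=\pi(x)-\pi(\tfrac12 x)$, so that $R_m$ is by definition the least integer with $g(x)\ge m$ for all $x\ge R_m$, while $g(R_m)=m$. The single fact driving every part is how $g$ changes as $x$ increases by $1$: in passing from $x-1$ to $x$, the term $\pi(x)$ grows by $1$ exactly when $x$ is prime, whereas $\pi(\tfrac12 x)$ grows by $1$ exactly when $x$ is even and $\tfrac12 x$ is prime. For part (i) I would take $x=p+1$, which is even (as $p=R_n$ is odd) and composite; if $\tfrac12(p+1)$ were prime then $\pi(\tfrac12 x)$ would jump while $\pi(x)$ stayed fixed, giving $g(p+1)=g(p)-1=n-1<n$ and contradicting $p+1\ge R_n$. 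Hence $\tfrac12(p+1)$ is composite.

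For part (ii) I would first show that between two adjacent primes of the run no prime can appear at half-scale. If $R_m$ and $R_{m+1}$ are consecutive primes, then on $[R_m,R_{m+1})$ the function $\pi(x)$ is constant, so $g(x)=\pi(R_m)-\pi(\tfrac12 x)$ is non-increasing there; since $g(R_m)=m$ and $g(x)\ge m$ is forced by $x\ge R_m$, in fact $g\equiv m$ on that interval, which says $\pi(\tfrac12 x)$ never increases, i.e.\ there is no prime in $\bigl(\tfrac12 R_m,\tfrac12 R_{m+1}\bigr)$. Translating, every integer $s$ with $\tfrac12(R_m+1)\le s\le \tfrac12(R_{m+1}-1)$ is composite. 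Running this over $m=n,n+1,\dots,n+r-2$ yields a block of composites from $\tfrac12(p+1)$ up to $\tfrac12(q-1)$; the successive sub-blocks abut because $\tfrac12(R_{m+1}-1)$ and $\tfrac12(R_{m+1}+1)$ are consecutive integers (both $R$'s being odd). Finally part (i), applied to the top prime $q=R_{n+r-1}$, makes $\tfrac12(q+1)$ composite as well, so the whole stretch from $\tfrac12(p+1)$ to $\tfrac12(q+1)$ is a prime gap. (The case $r=1$ is exactly part (i).)

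For part (iii) I would simply exhibit runs realizing equality, since the assertion is only that such runs exist. For instance $R_2=11$ gives the gap $\{6\}$ flanked by the primes $5=\tfrac12(p+1)-1$ and $7=\tfrac12(q+1)+1$, while $R_7=59$ gives the gap $\{30\}$ flanked by the twin primes $29$ and $31$; a short search produces further examples, and this is what I would report.

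The real work is part (iv), and here the twin hypothesis and Ramanujan-ness must both be used. Since $p$ and $q=p+2$ are twin primes with $p>3$, we have $2\mid p+1$ and $3\mid p+1$, so $p=6t-1$ and $a:=\tfrac12(p+1)=3t$, $b:=\tfrac12(q+1)=3t+1$. Parts (i)--(ii) give $a=3t$ and $a+1=3t+1$ composite, and $a+3=3(t+1)$ is visibly composite. The delicate point is $a+2=\tfrac12(q+3)=3t+2$, which elementary divisibility does not settle (indeed for the non-Ramanujan twins $17,19$ one gets $3t+2=11$ prime and the gap collapses to length $3$). To force it I would push the $g$-argument three steps past $q$: on $[q,q+3]$ there is no prime, since $q+1,q+3$ are even and $q+2=6t+3$ is a multiple of $3$, so $\pi$ is constant and $g(q+3)=\pi(q)-\pi(3t+2)=(n+1)-\bigl(\pi(3t+2)-\pi(3t)\bigr)$; as $3t+1=b$ is composite, $\pi(3t+2)-\pi(3t)$ equals $1$ precisely when $3t+2$ is prime, in which case $g(q+3)=n<n+1$, contradicting $q+3\ge R_{n+1}$. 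Hence $a+2$ is composite too. Now $a,a+1,a+2,a+3$ are four consecutive composites, and a single parity check completes a run of five: if $t$ is even then $a+4=3t+4$ is even, and if $t$ is odd then $a-1=3t-1$ is even, so in either case we adjoin one more composite. Thus $\{a,a+1\}$ sits inside five consecutive composites, i.e.\ a prime gap of length at least $5$. I expect the crux to be exactly showing $\tfrac12(q+3)$ is composite: it is the sole residue class the congruence bookkeeping misses, and it is precisely where the hypothesis that $q$ is Ramanujan, rather than merely an upper twin, enters.
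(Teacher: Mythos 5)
Your proposal is correct and follows essentially the same route as the paper's proof: the same monotonicity argument on $\pi(x)-\pi\!\left(\frac12x\right)$ for (i), the same consecutive-primes argument for (ii) (the paper phrases it as the $r=2$ case plus induction on $r$, you as abutting blocks over adjacent pairs, which is the same idea), and for (iv) the same reduction of twin primes to $(6t-1,6t+1)$ with the Ramanujan property of $q$ used to rule out $\frac12(q+3)$ being prime\emdash your only variation is applying that step uniformly in both parities of $t$, where the paper needs it only when $t$ is odd. One minor shortfall in (iii): both of your examples ($R_2=11$ and $R_7=59$) are runs of length $r=1$, so they witness sharpness of (i) but not of (ii) for genuine runs; the paper additionally exhibits the $r=2$ run $(R_{293},R_{294})=(4919,4931)$, whose gap from $2460$ to $2466$ is flanked by the primes $2459$ and $2467$.
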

\begin{proof}(i). Since $p=R_n$ is odd, $\pi(p)=\pi(p+1)$, and the quantity $\pi(x) - \pi\!\left(\frac12x\right)$ does not decrease from $x=p$ to $x=p+1$. Hence $\pi\!\left(\frac12p\right)\ge\pi\!\left(\frac12(p+1)\right)$, and so $\frac12(p + 1)$ is not prime.

\noindent(ii). By (i), the case $r=1$ holds. Taking $r=2$, let $p=R_n=p_k$ and $q=R_{n+1}=p_{k+1}$ be odd. By (i), neither $\frac12(p + 1)$ nor $\frac12(q + 1)$ is prime. If an integer $i$ lies strictly between them, then the oddness of $p$ and $q$ implies $p+1< j:=2i-1< q-1$. Since $p=p_k$ and $q=p_{k+1}$, we have $\pi(p)=k=\pi(j+1)$. As $p=R_n$ and $q=R_{n+1}$, it follows that $\pi(x) - \pi\!\left(\frac12x\right)$ does not decrease from $x=p$ to $x=j+1$. Hence $\pi\!\left(\frac12p\right)\ge\pi\left(\frac12(j+1)\right) =  \pi(i)$, and so $i$ is also not prime. This proves (ii) for runs of length $2$.

The general case follows easily by induction on $r$. Namely, given a run of length $r>2$ from $R_n=p_k$ to $R_{n+r-1}=p_{k+r-1}$, break it into a run of length $2$ from $R_n=p_k$ to $R_{n+1}=p_{k+1}$, concatenated with a run of length $r-1$ from $R_{n+1}=p_{k+1}$ to $R_{n+r-1}=p_{k+r-1}$.

\noindent(iii). For $r=1$, the composite number $\frac12(R_2 + 1)=\frac12(11+1)=6$ lies between the primes $5$ and~$7$. For an example with $r>1$, take the run $(R_{293}, R_{294})=(4919,4931)=(p_{657}, p_{658})$ of length $r=2$. It is associated to the prime gap from
$\frac12(R_{293} + 1) = 2460$ to $\frac12(R_{294} + 1)=2466$,
which is bounded by the primes $2459$ and $2467$.

\noindent(iv). Since $p>3$ and $q$ are twin primes, $(p,q)=(6k-1,6k+1)$ for some $k$. If $k=2i$ is even, then $\left(\frac12(p + 1),\frac12(q+1)\right)=(6i,6i+1)$ lies in the prime gap from $6i$ to $6i+4$.

Now assume that $k=2i+1$ is odd. Then $\left(\frac12(p + 1),\frac12(q+1)\right)=(6i+3,6i+4)$ will lie in a prime gap from $6i+2$ to $6i+6$, unless $6i+5$ is prime. But if $6i+5=\frac12(q+3)$ were prime, then, since $q+2=6k+3$ is not prime, $\pi(x) - \pi\!\left(\frac12x\right)$ would decrease from $x=q$ to $x=q+3$, contradicting the fact that $q$ is a Ramanujan prime. This completes the proof.
\end{proof}

For part (iii), the first ``sharp'' example of a run of length $r=1,2,\dotsc,11$ begins at the Ramanujan prime
$$11, 4919, 1439, 7187, 37547, 210143, 3376943, 663563, 4429739, 17939627, 12034427,$$
respectively \cite[Sequence \seqnum {A177804}]{oeis}. An example of part (iv) is the prime gap associated to the twin Ramanujan primes $R_{14}=149$ and $R_{15}=151$, which lies in the larger prime gap
$$74,\frac12(R_{14} + 1) = 75,\frac12(R_{15} + 1)=76,77,78.$$

\section{Appendix on the algorithm}

To compute a range of Ramanujan primes $R_i$ for $1 \le i \le n$, we
perform simple calculations in each interval $\left(k/2,k\right]$ for
$k=1,2,\ldots,p_{3n}-1$. To facilitate the calculation, we use a counter $s$ and a list
$L$ with $n$ elements $L_i$. Initially, $s$ and all $L_i$ are set to zero. They are updated as each interval is processed.

After processing an interval, $s$ will be equal to the number of primes in
that interval, and each $L_i$ will be equal either to the greatest index of the
intervals so far processed that contain exactly $i$ primes, or to zero if no interval
having exactly $i$ primes has yet been processed.

Having processed interval $k-1$, to find the number of primes in interval $k$ we perform two operations: add 1 to $s$ if
$k$ is prime, and subtract 1 from $s$ if $k/2$ is prime. We then update the $s$th element of the list
to $L_s=k$, because now $k$ is the largest index of all intervals processed that contain
exactly $s$ primes.

After all intervals have been processed, the list $R$ of
Ramanujan primes is obtained by adding 1 to each element of the list
$L$.

These ideas are captured in the following \textit{Mathematica} program for finding the first 169350 Ramanujan primes. 

\begin{center}
  \begin{tabular}{cccclccc}
      $$ &    & $$ &    $$ &       $\texttt{nn = 169350; }$ &       $$ &       \\ 
         $$ &  & $$ &    $$ &       $\texttt{L = Table[0, \{nn\}]; }$ &       $$ &    \\ 
       $$ &    & $$ &    $$ &       $\indent\texttt{s = 0; }$ &       $$ &      \\ 
       $$ &    & $$ &    $$ &       $\indent\texttt{Do[ }$ &       $$ &       \\ 
       $$ &    & $$ &    $$ &       $\hspace{.6cm}\texttt{If[PrimeQ[k], s++]; }$ &       $$ &      \\ 
        $$ &   & $$ &    $$ &       $\hspace{.6cm}\texttt{If[PrimeQ[k/2], s--]; }$ &       $$ &      \\ 
       $$ &    & $$ &    $$ &       $\hspace{.6cm}\texttt{If[s < nn, L[[s+1]] = k], }$ &       $$ &      \\ 
        $$ &   & $$ &    $$ &       $\hspace{.6cm}\texttt{\{k, Prime[3*nn]-1\}]; }$ &       $$ &     \\ 
        $$ &   & $$ &    $$ &       $\texttt{R = L + 1 }$ &       $$ &     $$ &   
  \end{tabular}
\end{center}

Although it is adequate for computing a modest number of
them, to compute many more requires a speedup of several orders of magnitude. That can be achieved
by using a lower-level programming language and generating prime numbers
via a sieve. With this speedup we computed all Ramanujan primes
below $10^9$ in less than three minutes on a 2.8 GHz Pentium 4 computer.

\section{Acknowledgment}

The authors thank Steven Finch for suggesting Schilling's paper \cite{schilling}. We are grateful to Professor Steven J. Miller's undergraduate students Nadine Amersi, Olivia Beckwith, and Ryan Ronan for pointing out and diagnosing mistakes (corrected here) in the ``Expected'' columns of Table~\ref{TABLE: runs} in the published version.

\bigskip
\hrule
\bigskip

\noindent 2010 {\it Mathematics Subject Classification}: 
Primary 11A41.     

\noindent \emph {Keywords:} prime gap, Ramanujan prime, twin prime.   

\bigskip
\hrule
\bigskip

\noindent (Concerned with sequences
\seqnum {A007508},
\seqnum {A065421},
\seqnum {A104272},
\seqnum {A173081},
\seqnum {A174602},
\seqnum {A174641},
\seqnum {A177804},
\seqnum {A178127},
\seqnum {A178128},
\seqnum {A179196},
\seqnum {A181678},
\seqnum {A189993},
and
\seqnum {A189994}.)

\end{document}